\newcolumntype{H}{>{\setbox0=\hbox\bgroup}c<{\egroup}@{}}
\newcommand{\R}{{\mathbb R}}
\newcommand{\E}{{\mathbb E}}
\renewcommand{\P}{{\mathbb P}}
\newcommand{\N}{{\mathbb N}}
\newcommand{\eps}{\varepsilon}
\newcommand{\argmin}[1]{{\operatorname{argmin}}_{#1}}
\newcommand{\lmin}[1]{{\operatorname{\lambda_{\text{min}}}}{#1}}
\DeclareMathOperator{\Var}{Var}
\DeclareMathOperator{\Cov}{Cov}
\DeclareMathOperator{\trace}{trace}
\DeclareMathOperator{\sign}{sign}
\DeclareMathOperator{\diag}{diag}
\newtheorem{theorem}{Theorem}[section]
\newtheorem{proposition}[theorem]{Proposition}
\newtheorem{lemma}[theorem]{Lemma}
\newtheorem{remark}[theorem]{Remark}
\newtheorem*{remark*}{Remark}
\newtheorem*{definition*}{Definition}
\numberwithin{equation}{section}
\newcounter{rcnt}[section]
\newcommand{\rem}[1]{}
\newcounter{desccount}
\newcommand{\descref}[1]{\hyperref[#1]{#1}}
\begin{document}


\title{Leave-one-out prediction intervals in linear regression models with many variables}
\runauthor{Steinberger, Leeb}

\runtitle{Prediction intervals in linear regression}

\begin{aug}
\author{\fnms{Lukas} \snm{Steinberger}}
\and
\author{\fnms{Hannes} \snm{Leeb}}

\affiliation{University of Vienna}

\address{
	Department of Statistics and Operations Research\\
	University of Vienna  \\
	Oskar-Morgenstern-Platz 1 \\
	1090 Vienna, Austria
	}
	
\end{aug}

\begin{abstract}
We study prediction intervals based on leave-one-out residuals in a linear regression model where the number of explanatory variables can be large compared to sample size. We establish uniform asymptotic validity (conditional on the training sample) of the proposed interval under minimal assumptions on the unknown error distribution and the high dimensional design. Our intervals are generic in the sense that they are valid for a large class of linear predictors used to obtain a point forecast, such as robust $M$-estimators, James-Stein type estimators and penalized estimators like the LASSO. 
These results show that despite the serious problems of resampling procedures for inference on the unknown parameters \citep[cf.][]{Bickel83, Mammen96, ElKaroui15}, leave-one-out methods can be successfully applied to obtain reliable predictive inference even in high dimensions. 
\end{abstract}

\maketitle


\section{Introduction}

When a prediction for the value of a response variable is calculated, based on the corresponding explanatory variables and a training sample, then a prediction interval provides important additional information about the uncertainty associated with the point prediction. 
We suggest a very simple method based on leave-one-out residuals which is generic in the sense that it provides asymptotically honest prediction intervals for a large class of possible point predictors. 
Our approach is similar, in spirit, to the methods proposed in \citet{ButlerRoth80} and \citet{Stine85} \citep[see also][]{Schmoyer92}, in the sense that we rely on resampling and leave-one-out ideas for predictive inference. However, the methods from these references, like most resampling procedures in the literature, are investigated only in the classical large sample asymptotic regime. Notable exceptions are \citet{Bickel83}, \citet{Mammen96} and, recently, \citet{ElKaroui15}. These articles draw mainly negative conclusions, arguing, for instance, that the famous residual bootstrap in linear regression, which relies on the consistent estimation of the true unknown error distribution, is unreliable when the number of variables in the model is not small compared to sample size. In contrast, our method does not suffer from these problems because we directly estimate the conditional distribution of the prediction error instead of the true unknown distribution of the disturbances.

Our work is greatly inspired by \cite{ElKaroui13b} and \citet{Bean13} \citep[see also][]{ElKaroui13}, who investigate the efficiency of general $M$-estimators in linear regression when the number of regressors $p$ is of the same order of magnitude as sample size $n$. In particular, the $M$-estimators studied in these references provide one leading example of a class of linear predictors that are compatible with our construction of prediction intervals. Other classes of predictors that can be used with our method are those based on James-Stein type shrinkage estimators as well as penalized estimators like the LASSO (see Section~\ref{sec:estimators}).

\subsection{A generic prediction interval in linear regression}

Given i.i.d. training data $(y_i, x_i)_{i=1}^n$ and a feature vector $x_0$, our goal is to predict $y_0$, where all pairs obey the linear model
$$
y_i \;=\; \beta'x_i \;+\; u_i \hspace{1cm} i=0,1,\dots, n.
$$ 
Here, $\beta=\beta_n\in\R^{p_n}$, $x_i$ is independent of the error $u_i$, with $\E[x_1]=0$ and $\E[x_1x_1']=\Sigma=\Sigma_n\in S_{p_n}$, where $S_{p_n}$ is the set of symmetric, positive definite $p_n\times p_n$ matrices. In this model linear predictors of the form $x_0'\hat{\beta}$ are the method of choice, where $\hat{\beta} = \hat{\beta}(Y,X)$ is an estimator for $\beta$ based on the training data $Y= (y_1,\dots, y_n)'$, $X=[x_1,\dots, x_n]'$. Moreover, we want to provide a measure of uncertainty associated with this prediction. In particular, we want to construct a conditionally asymptotically honest $(1-\alpha)$ prediction interval for the future response value $y_0$, i.e., an interval valued function $PI_\alpha (Y, X, x_0)$ such that
\begin{align}\label{eq:honest}
\sup_{\substack{\beta\in\R^{p_n}, \sigma^2>0\\ \Sigma\in S_{p_n}}} \E_{\beta,\sigma^2, \Sigma}\left[ 
	\left| \P_{\beta,\sigma^2, \Sigma} \left( y_0 \in PI_\alpha(Y,X,x_0) \Big| Y,X \right) - (1-\alpha) \right| \right] \; \xrightarrow[n\to\infty]{} \; 0.
\end{align}
This property should remain to hold even if $p_n$ is allowed to grow with $n$ such that $p_n/n\gg0$. We also want our prediction interval to be generic, so that it can be applied irrespective of the estimator $\hat{\beta}$ used to obtain a point prediction. We propose the following simple construction, which, to the best of our knowledge, has never been studied in the present asymptotic framework of $p_n/n\gg0$.

When the estimator $\hat{\beta}$ is used to obtain a prediction $x_0'\hat{\beta}$ for $y_0$, we construct the leave-one-out prediction interval $PI_\alpha^{(L1O)}$ as follows. For $\alpha\in(0,1)$, let $\tilde{q}_{n,\alpha}$ denote an empirical $\alpha$ quantile of the sample $\tilde{u}_1,\dots, \tilde{u}_n$ of leave-one-out residuals $\tilde{u}_i = y_i - x_i'\hat{\beta}_{(i)}$, where $\hat{\beta}_{(i)}$ is the estimator calculated from the sample of size $n-1$ obtained by removing the observation $(y_i,x_i)$. Then $PI_\alpha^{(L1O)}$ is given by
\begin{align}\label{eq:L1OPI}
PI_\alpha^{(L1O)}(Y,X,x_0) \;=\; \Big[ x_0'\hat{\beta} \;+\; \tilde{q}_{n,\alpha/2}, \;x_0'\hat{\beta} \;+\;\tilde{q}_{n,1-\alpha/2}\Big].
\end{align}
Theorem~\ref{thm:L1OPI} below shows that this prediction interval has the property~\eqref{eq:honest}, provided that the sequence of estimators $\hat{\beta}_n$ used to obtain a point prediction is sufficiently regular (see Section~\ref{sec:main}).

The idea behind this procedure is remarkably simple. When predicting $y_0$ by $x_0'\hat{\beta}$, we make the error $y_0 - x_0'\hat{\beta}$, which we do not observe in practice. To estimate the distribution of this prediction error we simply use the empirical distribution of the leave-one-out residuals $\tilde{u}_i = y_i -x_i'\hat{\beta}_{(i)}$. Notice that $\hat{\beta} = \hat{\beta}(Y,X)$ is independent of $(y_0, x_0)$, and $\hat{\beta}_{(i)}$ is independent of $(y_i, x_i)$, and thus, $\tilde{u}_i$ has almost the same distribution as the prediction error, except that $\hat{\beta}_{(i)}$ is calculated from one observation less than $\hat{\beta}$. In most cases this difference turns out to be negligible if $n$ is large, even if $p_n$ is large too. Once we have estimated the distribution of the prediction error $y_0 - x_0'\hat{\beta}$ it is straight forward to construct a prediction interval for $y_0$ using quantiles. 
Moreover, it turns out that in the same way one can estimate the conditional distribution of $y_0 - x_0'\hat{\beta}$, given the training sample $Y, X$, and that this estimation is successful even in a situation where there are many variables in the regression and where the feature vectors $x_i$ have a complex geometric structure, in the sense that possibly $| \|x_1/\sqrt{p_n}\|_2 - 1| \gg 0$ \citep[See, e.g.,][Section 3.2, for a discussion why this is desirable.]{ElKaroui10}.

The remainder of the paper is organized as follows. In Section~\ref{sec:main} we state our main result Theorem~\ref{thm:L1OPI} together with a detailed description of the technical conditions needed for its proof. In Section~\ref{sec:estimators} we specifically discuss the conditions imposed on the estimators $\hat{\beta}$ used to construct the point prediction $x_0'\hat{\beta}$, and we provide some leading examples of estimators which satisfy these conditions. Section~\ref{sec:conclusion} concludes.

\section{Main results}
\label{sec:main}

To define the data generating process, consider a double infinite array $V_0 = \{v_{ij} : i,j=0,1,2,\dots\}$ of i.i.d. random variables with mean zero, unit variance and finite fourth moment, and two sequences $L_0 = \{l_0, l_1, \dots\}$ and $U_0 =\{u_0,u_1, \dots\}$, each consisting of i.i.d. random variables where $l_0$ satisfies $\E[l_0^2] = 1$ and $|l_0|\ge c > 0$. The arrays $V_0$, $L_0$ and $U_0$ are assumed to be jointly independent, but no assumptions are imposed on the distribution of $u_0$. Throughout, we consider the case where the number $p=p_n$ of explanatory variables in the model depends on sample size $n$. Moreover, we consider the class $S_{p_n}$ of $p_n\times p_n$ positive definite covariance matrices and let the unknown covariance of the design be denoted by $\Sigma_n\in S_{p_n}$. By $\Sigma^{1/2}$ we always refer to the unique symmetric and positive definite square root of $\Sigma$. The training data are given by the i.i.d. pairs $(y_{i,n},x_{i,n})_{i=1}^n$, the observed regressor vector in the forecast period is given by $x_{0,n}$ and the corresponding unobserved response variable is $y_{0,n}$, where $x_{i,n} = \Sigma_n^{1/2}l_i (v_{i1},\dots, v_{ip_n})'$,  
\begin{align}
y_{i,n} \;=\; \beta_n'x_{i,n} \;+\; \sigma_n u_i, \hspace{1cm} \text{for } i=0,1,2,\dots,
\end{align} 
and where $\beta_n \in\R^{p_n}$ and $\sigma_n>0$ are unknown parameters. We adopt the usual matrix notation $Y = (y_{1,n},\dots, y_{n,n})'$ and $X = [x_{1,n},\dots, x_{n,n}]'$. Most of the time we will drop the dependence on $n$ in the notation for convenience.

We also need to specify the class of estimators for which the leave-one-out prediction interval provides valid inference. The following set of conditions is used in Theorem~\ref{thm:L1OPI} below.

\begin{enumerate}
\renewcommand{\theenumi}{(C\arabic{enumi})}
\renewcommand{\labelenumi}{\textbf{\theenumi}}

\item \label{c.gauss} 
\begin{enumerate}
\renewcommand{\theenumii}{\alph{enumii}}
\renewcommand\labelenumii{(\theenumii)}
\makeatletter \renewcommand\p@enumii{} \makeatother

	\item \label{c.gauss1}
		For every $n$, the estimator $\hat{\beta}_n : \R^{n\times (p_n+1)} \mapsto \R^{p_n}$, is symmetric in the observations $w_i = (y_i, x_i')'$ in the sense that 
			$$
				\hat{\beta}_n\left( [w_1,\dots, w_n]' \right) = \hat{\beta}_n\left( [w_{\pi(1)},\dots, w_{\pi(n)}]' \right),
			$$
			for every choice of $w_i\in\R^{p_n+1}$ and for every permutation $\pi$ of $n$ elements.
			
	\item \label{c.gauss2}
		There exists $\tau\in[0,\infty)$, such that for every $\eps >0$,
		$$
		\sup_{\substack{\beta\in\R^{p_n}, \sigma^2>0\\ \Sigma\in S_{p_n}}} \P_{\beta, \sigma^2, \Sigma}
			\left(\left| \|\Sigma^{1/2}(\hat{\beta}_n -\beta)/\sigma\|_2 \; - \; \tau\right| > \eps \right)
			\xrightarrow[n\to\infty]{} 0.
		$$
		
	\item \label{c.gauss3}
		For every $\eps>0$,
		$$
		\sup_{\substack{\beta\in\R^{p_n}, \sigma^2>0\\ \Sigma\in S_{p_n}}} \P_{\beta, \sigma^2, \Sigma}
			\left( \left\|
				\Sigma^{1/2}(\hat{\beta}_n - \hat{\beta}_{(1),n})/\sigma
			\right\|_2 > \eps \right)
			\xrightarrow[n\to\infty]{} 0.
		$$		
	\end{enumerate}

\item \label{c.general} 
		There exists $\delta\in(0,2]$, such that for every $\eps>0$, 
		$$ 
		\sup_{\substack{\beta\in\R^{p_n}, \sigma^2>0\\ \Sigma\in S_{p_n}}} \P_{\beta, \sigma^2, \Sigma}
			\left( \|\Sigma^{1/2}(\hat{\beta}_n -\beta)/\sigma\|_{2+\delta} > \eps \right)
			\xrightarrow[n\to\infty]{} 0.
		$$
\end{enumerate}

Under these specifications we can proof our main result on the leave-one-out prediction interval in \eqref{eq:L1OPI}. 

\begin{theorem}\label{thm:L1OPI}
In the data generating model introduced above the following holds true:
\begin{enumerate}
	\setlength\leftmargin{-20pt}
	\renewcommand{\theenumi}{(\roman{enumi})}
	\renewcommand{\labelenumi}{{\theenumi}} 
\item \label{thm:tau>0}
If Condition~\ref{c.gauss} holds with $\tau>0$ and $p_n\to\infty$ as $n\to\infty$, then either Condition~\ref{c.general} or the requirement that $v_{ij} \thicksim \mathcal N(0,1)$ imply that the leave-one-out prediction interval $PI_\alpha^{(L1O)}$ in \eqref{eq:L1OPI} satisfies \eqref{eq:honest}.

\item \label{thm:tau=0}
If Condition~\ref{c.gauss} holds with $\tau = 0$ and the distribution of the error $u_0$ has a continuous and strictly increasing distribution function, then the leave-one-out prediction interval $PI_\alpha^{(L1O)}$ in \eqref{eq:L1OPI} satisfies \eqref{eq:honest}.

\item \label{thm:adaptation}
If the assumptions of either case \ref{thm:tau>0} or \ref{thm:tau=0} above are satisfied, then the scaled length $\ell_n := |PI_\alpha^{(L1O)}|/\sigma_n = (\tilde{q}_{n,1-\alpha/2} - \tilde{q}_{n,\alpha/2})/\sigma_n$ of the leave-one-out prediction interval $PI_\alpha^{(L1O)}$ in \eqref{eq:L1OPI} satisfies $\ell_n \to \ell_\alpha(\tau)$ in probability as $n\to\infty$, where $\ell_\alpha(\tau) = q_{1-\alpha/2} - q_{\alpha/2}$ is the corresponding inter-quantile range of the distribution of $l_0N\tau + u_0$, and where $N\thicksim \mathcal N(0,1)$ is independent of $(l_0,u_0)$.

\end{enumerate}
\end{theorem}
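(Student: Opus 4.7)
The plan is to reduce both sides of \eqref{eq:honest} to the same limit law, namely the distribution of $u_0 + l_0 N\tau$ with $N\sim\mathcal N(0,1)$ independent of $(l_0,u_0)$. Specifically, I want to show (a) that conditionally on $(Y,X)$ the scaled prediction error $R_0/\sigma_n:=(y_0-x_0'\hat\beta)/\sigma_n$ converges in distribution, in probability, to $u_0+l_0N\tau$, and (b) that the empirical CDF $\hat F_n$ of the scaled leave-one-out residuals $\tilde u_i/\sigma_n$ converges uniformly in probability to the CDF $F_\tau$ of that same law. Continuity and strict monotonicity of $F_\tau$ come for free in case~\ref{thm:tau>0} because $\tau>0$ and $|l_0|\ge c>0$ force $l_0N\tau$ to be absolutely continuous, and come from the hypothesis on $u_0$ in case~\ref{thm:tau=0}.

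For (a), write $R_0/\sigma_n = u_0 - l_0 Z'c$, where $Z:=(v_{01},\dots,v_{0p_n})'$ and $c:=\Sigma_n^{1/2}(\hat\beta-\beta)/\sigma_n$, and note that $(u_0,l_0,Z)$ is independent of $(Y,X)$. Conditionally on $(Y,X)$, $Z'c$ is a weighted sum of i.i.d.\ mean-zero unit-variance variables with deterministic weights $c_j$. In the Gaussian case $Z'c\mid(Y,X)\sim\mathcal N(0,\|c\|_2^2)$ exactly; under Condition~\ref{c.general} the bound on $\|c\|_{2+\delta}$ combined with $\|c\|_2\to\tau>0$ from Condition~\ref{c.gauss}(b) gives Lyapunov's ratio $\|c\|_{2+\delta}^{2+\delta}/\|c\|_2^{2+\delta}\to 0$ in probability, hence $Z'c\mid(Y,X)\Rightarrow\mathcal N(0,\tau^2)$ on the high-probability event $\{\|c\|_2\to\tau\}$; in case~\ref{thm:tau=0} the condition $\|c\|_2\to 0$ alone forces $Z'c\to 0$ in conditional probability. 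Since the conditions are stated uniformly in $(\beta,\sigma^2,\Sigma)$, so is the resulting convergence.

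For (b), set $c_{(i)}:=\Sigma_n^{1/2}(\hat\beta_{(i)}-\beta)/\sigma_n$, so $\tilde u_i/\sigma_n = u_i - l_i V_i'c_{(i)}$ with $V_i:=(v_{i1},\dots,v_{ip_n})'$, and exploit the crucial independence $\hat\beta_{(i)}\perp(y_i,x_i)$. Condition~\ref{c.gauss}(c) and the inequality $\|\cdot\|_{2+\delta}\le\|\cdot\|_2$ on $\R^{p_n}$ ensure that $c_{(i)}$ inherits the asymptotic behavior of $c$ required in (a), so the marginal law of $\tilde u_1/\sigma_n$ converges to $F_\tau$; by exchangeability $\E[\hat F_n(t)]\to F_\tau(t)$ at every continuity point of $F_\tau$. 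For the variance I would use a leave-two-out device: letting $\hat\beta_{(1,2)}$ denote the estimator from the sample with $(y_1,x_1)$ and $(y_2,x_2)$ removed, applying Condition~\ref{c.gauss}(c) to the $(n-1)$-subsample (legitimate by the symmetry in~\ref{c.gauss}(a)) gives $\|\Sigma_n^{1/2}(\hat\beta_{(i)}-\hat\beta_{(1,2)})/\sigma_n\|_2\to 0$ in probability for $i=1,2$; conditionally on $\hat\beta_{(1,2)}$ the pair $(\tilde u_1,\tilde u_2)$ is therefore asymptotically independent, which drives $\Cov(\mathbf 1\{\tilde u_1/\sigma_n\le t\},\mathbf 1\{\tilde u_2/\sigma_n\le t\})\to 0$ and hence $\Var\hat F_n(t)\to 0$. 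Pointwise convergence in probability at a countable dense set, combined with monotonicity of $\hat F_n$ and continuity of $F_\tau$, lifts to uniform convergence $\sup_t|\hat F_n(t)-F_\tau(t)|\to 0$ in probability by a standard Polya-type argument.

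Combining (a) and (b), the empirical quantiles $\tilde q_{n,\alpha/2}/\sigma_n$ and $\tilde q_{n,1-\alpha/2}/\sigma_n$ converge in probability to $q_{\alpha/2}$ and $q_{1-\alpha/2}$, which proves part~\ref{thm:adaptation}, and the conditional coverage probability $\P(\tilde q_{n,\alpha/2}\le R_0\le\tilde q_{n,1-\alpha/2}\mid Y,X)$ converges in probability to $F_\tau(q_{1-\alpha/2})-F_\tau(q_{\alpha/2})=1-\alpha$; since this quantity lies in $[0,1]$, dominated convergence delivers \eqref{eq:honest}, with the supremum over parameters handled by the uniformity already built into Conditions~\ref{c.gauss} and~\ref{c.general}. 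The main technical obstacle is the leave-two-out step: Condition~\ref{c.gauss}(c) only constrains leave-one-out stability of the full-sample estimator, so one must carefully transfer it to $(n-1)$-subsamples with rates strong enough to close the covariance estimate; without such a transfer the empirical CDF need not concentrate and the strategy collapses.
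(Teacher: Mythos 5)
Your proposal follows essentially the same route as the paper's proof: a conditional CLT for the scaled prediction error (Lyapunov's condition supplied by Condition~\ref{c.general} via $\|c\|_{2+\delta}/\|c\|_2\to 0$, exact normality in the Gaussian case, and degeneracy when $\tau=0$), combined with consistency of the leave-one-out empirical distribution obtained from exchangeability, convergence of the marginal law of $\tilde u_1/\sigma$, and a leave-two-out argument showing $\Cov\bigl(\mathbf 1\{\tilde u_1/\sigma\le t\},\mathbf 1\{\tilde u_2/\sigma\le t\}\bigr)\to 0$, after which quantile convergence yields both the coverage statement and part~\ref{thm:adaptation}. The ``main technical obstacle'' you flag is not actually one: Conditions~\ref{c.gauss}.(\ref{c.gauss2},\ref{c.gauss3}) are hypotheses on the estimator sequence $(\hat\beta_n)_n$, uniform over all parameters, so they apply verbatim to the estimator computed from the $n-1$ i.i.d.\ observations of a leave-one-out subsample; hence $\|\Sigma^{1/2}(\hat\beta_{(i)}-\hat\beta_{(ij)})/\sigma\|_2=o_{\P}(1)$ requires no rate, because the leave-two-out residuals $\tilde u_{1(2)},\tilde u_{2(1)}$ are exactly conditionally independent given $Y_{(12)},X_{(12)}$ and the corrections $\tilde e_i$ only need to vanish in probability for the covariance to tend to zero.
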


Theorem~\ref{thm:L1OPI} establishes the conditional asymptotic validity of $PI_\alpha^{(L1O)}$ under virtually no assumptions on the true error distribution and without any restriction of the relative growth of $p_n$ and $n$. Of course, this is possible only because of the high level assumptions \ref{c.gauss} and \ref{c.general} on the estimator sequences. To verify these conditions for a specific sequence of $\hat{\beta}_n$, somewhat stronger assumptions on the data generating process than those imposed by the theorem are typically needed. In Section~\ref{sec:estimators} we discuss several classes of estimators that exhibit these required properties under different sets of assumptions on the data generating process. In particular, we will see that when the number of parameters $p_n$ increases at the same rate as $n$ such that $p_n/n\to\kappa>0$, then many classical estimators will have a scaled conditional mean squared prediction error $\E[|x_0'\beta - x_0'\hat{\beta}_n|^2/\sigma^2|Y,X] = \|\Sigma^{1/2}(\hat{\beta}_n-\beta)/\sigma\|_2^2$ that converges to a non-random limit $\tau^2$ which is strictly positive (as required in Theorem~\ref{thm:L1OPI}.\ref{thm:tau>0}). In this scenario, the performance of different predictors can be distinguished by their respective value of $\tau$, which, however, is not observed in practice \citep[cf.][]{ElKaroui13, Bean13}. Alternatively, the length of a prediction interval based on a point predictor can also serve as a performance measure for the predictor.
Part~\ref{thm:adaptation} of the theorem reveals how the length of $PI_\alpha^{(L1O)}$ depends on $\tau$ and the distribution of $(l_0,u_0)$. If the error distribution is sufficiently regular, then the asymptotic length $\ell_\alpha(\tau)$ is monotonic in $\tau$ (cf. Remark~\ref{rem:adaptation}). If $p_n/n\to0$, most reasonable estimators will be consistent and thus lead to a value of $\tau=0$, under sufficient regularity conditions. In this scenario, the asymptotic length of $PI_\alpha^{(L1O)}$ depends only on the distribution of the error term $u_0$. 
In the case \ref{thm:tau=0} $\tau=0$, the additional assumption on the error distribution is needed to guarantee that the quantiles of the asymptotic distribution of the prediction error are unique also in this case. Without this assumption it can be shown that the asymptotic coverage probability in \eqref{eq:honest} is no less than $1-\alpha$. 

The assumptions imposed on the design distribution are relatively mild. In particular, the entries of the standardized design vectors $\Sigma^{-1/2}x_i$ are not required to be independent because the scalar random variable $l_i$ can introduce some form of dependence. The $l_i$ also have the effect that $| \|\Sigma^{-1/2}x_i\|_2/\sqrt{p_n} - 1| \gg 0$, even if $p_n$ is large. This feature is not present in most designs studied in the literature on high-dimensional regression, where it is often implicitly assumed that all the design vectors $x_i$ are closely concentrated on a sphere of radius $\sqrt{p_n}$ \citep[cf. the discussion in][Section 3.2]{ElKaroui10}. The assumptions on the array $V_0$ are standard in random matrix theory. The existence of the fourth moment is needed in the proof of Theorem~\ref{thm:L1OPI}\ref{thm:tau>0} in case that \ref{c.general} holds only with $\delta=2$ and could be relaxed to a $2+\delta$ moment restriction. However, the fourth moment is also essential in establishing the properties \ref{c.gauss} and \ref{c.general} for the ordinary least squares estimator (cf. Proposition~\ref{prop:OLS} below).

Also note that in part~\ref{thm:tau>0} we have not imposed any conditions on the distribution of the error term $u_0$. A minimal set of assumptions on this distribution is usually needed in order to prove the regularity \ref{c.gauss} or \ref{c.general} of a given estimator $\hat{\beta}$ used for prediction. But such assumptions may be very mild, not even requiring existence of any moments \citep[cf.][for LAD estimation in case $p_n=p$ is fixed]{Zhao93}, and they are not needed at all for the proof of Theorem~\ref{thm:L1OPI} once the regularity \ref{c.gauss} and/or \ref{c.general} is given.

\begin{remark}\normalfont
We note that the leave-one-out prediction interval $PI_\alpha^{(L1O)}$ is, in general, not symmetric about the point prediction $x_0'\hat{\beta}$, even if the $\alpha/2$ and $1-\alpha/2$ quantiles are used, due to a possibly skewed error distribution. Moreover, since the procedure is based on empirical quantiles, it immediately allows for the construction of one-sided intervals.
\end{remark}

\begin{remark}\normalfont\label{rem:adaptation}
As discussed above, predictor accuracy is measured by the asymptotic conditional mean squared prediction error $\tau^2$ (cf. \ref{c.gauss}.(\ref{c.gauss2})). Intuitively, a more accurate point predictor should also lead to a shorter prediction interval, so that interval length is equivalent to mean squared prediction error as a relative performance measure. However, Theorem~\ref{thm:L1OPI}.\ref{thm:adaptation} shows that the dependence of the asymptotic interval length $\ell_\alpha(\tau)$ on $\tau$ is mediated by the error distribution in such a way that $\tau\mapsto \ell_\alpha(\tau)$ may be non-monotonic. Consider, for example, the case where $l_0=1$ and $u_0$ takes only the values $-1$ and $1$ with equal probability. 
A sufficient condition for $\tau\mapsto \ell_\alpha(\tau)$ to be non-decreasing on $(0,\infty)$ for all $\alpha\in(0,1)$ is the dispersiveness of the error distribution in the sense of \citet[][Section 5]{Lewis81}. To see this, simply note that $l_0 N \tau_2$ is more dispersed than $l_0 N \tau_1$ if $\tau_1\le \tau_2$, because the quantile function of $l_0 N \tau$ is given by $\tau$ times the quantile function of $l_0 N$. The class of absolutely continuous dispersive distributions is characterized by log-concave densities \citep[][Theorem 8]{Lewis81}. Thus, if $u_0$ has a log-concave density, then $\ell_\alpha(\tau)$ and $\tau$ are compatible measures of the relative performance of two point predictors and $PI_\alpha^{(L1O)}$ is adaptive in the sense that its asymptotic length automatically adjusts according to the asymptotic conditional mean squared prediction error of the underlying point predictor.
\end{remark}

\section{Assumptions on the estimator sequence}
\label{sec:estimators}

\subsection{Discussion of the assumptions}

In this section we discuss the conditions~\ref{c.gauss} and \ref{c.general} used in the proof of Theorem~\ref{thm:L1OPI}. First, \ref{c.gauss}.(\ref{c.gauss1}) states that the ordering of the pairs $(y_i,x_i)$ should have no impact on the estimation of $\beta$. This invariance property should be satisfied by any reasonable estimator since the observations are i.i.d., and thus, exchangeable. Therefore, this assumption is very mild and its sole purpose is to ensure that also the leave-one-out residuals $\tilde{u}_1,\dots, \tilde{u}_n$ are exchangeable random variables.

The conditions~\ref{c.gauss}.(\ref{c.gauss2},\ref{c.gauss3}) are the core ingredients for Theorem~\ref{thm:L1OPI}. They are inspired by the work of \citet{ElKaroui13b} \citep[see also][]{Bean13, ElKaroui13}. 
More specifically, Condition~\ref{c.gauss}.(\ref{c.gauss2}) requires the scaled conditional mean squared prediction error $\E[(x_0'\beta-x_0'\hat{\beta}_n)^2/\sigma^2|Y,X] = \|\Sigma^{1/2}(\hat{\beta}_n-\beta)/\sigma\|_2^2$ to have a non-random limit $\tau^2$. If $p_n/n\to0$, reasonable estimators should be consistent, or even uniformly consistent, which entails that \ref{c.gauss}.(\ref{c.gauss2}) holds with $\tau=0$ and consequently also \ref{c.gauss}.(\ref{c.gauss3}) follows. However, if $p_n/n\to\kappa\in(0,1)$, consistency generally fails. \citet{ElKaroui13b} have argued that in this case $\|\Sigma^{1/2}(\hat{\beta}_n-\beta)/\sigma\|_2$ often has a strictly positive but non-random limit $\tau$ which then provides a meaningful measure for estimation or prediction performance of an estimator sequence $\hat{\beta}_n$. The dependence of $\tau$ on the underlying data generating process and the estimation procedure (e.g., the function $\rho$ in $M$-estimation) is highly non-trivial. First results in that direction are obtained in \citet{ElKaroui13b} and \citet{Bean13} with the goal of choosing an optimal loss function $\rho$. The beauty of Theorem~\ref{thm:L1OPI} and Condition~\ref{c.gauss}.(\ref{c.gauss2}) in our context is that we do not need any information about $\tau$ other than that it is finite. On the contrary, Theorem~\ref{thm:L1OPI}\ref{thm:adaptation} actually shows that the lengths of leave-one-out prediction intervals computed for competing estimators can be used to get an idea about their relative mean squared prediction errors (at least under sufficiently regular error distributions, cf. Remark~\ref{rem:adaptation}). On a technical level, Condition~\ref{c.gauss}.(\ref{c.gauss2}) allows us to derive the asymptotic conditional distribution of the prediction error $y_0 - x_0'\hat{\beta}_n$, given $Y, X$.
Condition~\ref{c.gauss}.(\ref{c.gauss3}) requires that the influence of any single pair of observations $(y_i,x_i)$ on the value of the estimate $\hat{\beta}$ is asymptotically negligible. This condition guarantees that the empirical distribution of the leave-one-out residuals $\tilde{u}_i$ estimates the distribution of the prediction error $y_0-x_0'\hat{\beta}_n$ consistently.

Finally, Condition~\ref{c.general} requires uniform consistency in $\ell_{2+\delta}$-norm. It turns out that this is typically the case for many estimators, even if $p_n/n\to \kappa>0$, in which case consistency in $\ell_2$-norm generally fails \citep[cf.][Section 1.2]{ElKaroui15}. Condition~\ref{c.general} is used to show that the conditional distribution of $x_0'(\hat{\beta}_n-\beta)/\sigma$ converges to the distribution of $l_0N\tau$, in an appropriate sense, even if the $v_{ij}$ are non-Gaussian, and where $N\thicksim \mathcal N(0,1)$ is independent of $l_0$. In this argument, \ref{c.general} ensures the validity of Lyapounov's condition in the corresponding central limit theorem (cf. Section~\ref{sec:proofL1OPI} and specifically Lemma~\ref{lemma:UnifWeak}).

\subsection{Examples}

An important class of estimators that fit in our framework is discussed in \citet{ElKaroui13b}, \citet{Bean13} and \citet{ElKaroui13}. 
In these references the authors argue that many robust $M$-estimators of the form $\hat{\beta}_\rho = \argmin{b\in\R^p}\sum_{i=1}^n \rho(y_i-b'x_i)$ satisfy \ref{c.gauss}.(\ref{c.gauss2},\ref{c.gauss3}), under regularity conditions on the convex loss function $\rho$ and the data generating process, provided that $p_n/n\to \kappa\in(0,1)$. Since these $M$-estimators clearly satisfy also \ref{c.gauss}.(\ref{c.gauss1}), the setting of \citet{ElKaroui13b} constitutes one leading example where our prediction intervals provide valid predictive inference in high dimensions. We also note that many estimators in this class obey the relation $\Sigma^{1/2}(\hat{\beta}_\rho-\beta)/\sigma = \argmin{b\in\R^p}\sum_{i=1}^n\rho(u_i-b'\Sigma^{-1/2}x_i)$, whose distribution does not depend on the parameters $\beta$, $\sigma$ and $\Sigma$.

Another important example of a class of estimators where \ref{c.gauss} can be expected to hold even in a high-dimensional setting is given by the James-Stein type estimators $\hat{\beta}_{JS}(c) = (1-cp/\hat{\beta}_{LS}'X'X\hat{\beta}_{LS})\hat{\beta}_{LS}$, where $\hat{\beta}_{LS}$ is the ordinary least squares estimator. These estimators were examined in a closely related high dimensional predictive inference setting by \citet{Huber13} and \citet{Huber13b}. However, in the case of James-Stein type estimators, the constant $\tau$ in \ref{c.gauss}.(\ref{c.gauss2}) has to be allowed to depend on $n$ and on the parameters $\beta$, $\Sigma$ and $\sigma^2$ which calls for a minor modification of the proof of Theorem~\ref{thm:L1OPI}.

It is also worthwhile to put a special focus on the classical least squares estimator $\hat{\beta}_{LS} = (X'X)^\dagger X'Y$, which is a member of both classes mentioned above. Here, the superscript `$\dagger$' denotes the Moore-Penrose inverse of a matrix. The specifically simple structure of $\hat{\beta}_{LS}$ allows us to verify \ref{c.gauss} and \ref{c.general} under very mild assumptions on the data generating process and to obtain a concrete expression for $\tau$ (see Proposition~\ref{prop:OLS} below, whose proof is deferred to Section~\ref{sec:OLS} in the appendix). Moreover, we point out that in the case of ordinary least squares with regular design ($\det(X'X)\ne 0$), the leave-one-out residuals $\tilde{u}_i = y_i -x_i'\hat{\beta}_{(i),LS}$ obey the relation $\tilde{u}_i = \hat{u}_i/(1-h_i)$, where $h_i = x_i'(X'X)^{-1}x_i$ and $\hat{u}_i = y_i - x_i'\hat{\beta}_{LS}$ is the $i$-th OLS residual. This relation can be used, for example, to compute $PI_\alpha^{(L1O)}$ more efficiently.

\begin{proposition}\label{prop:OLS}
If, in addition to the assumptions of Section~\ref{sec:main}, also $\E[u_0]=0$, $\E[u_0^2] = 1$, $\E[u_0^4] <\infty$ and $p_n/n\to \kappa\in[0,1)$, then the ordinary least squares estimator $\hat{\beta}_{LS} = (X'X)^{\dagger}X'Y$ satisfies the conditions \ref{c.gauss} and \ref{c.general}. Moreover, in this case, the constant $\tau = \tau(\kappa)$ in Condition~\ref{c.gauss}.(\ref{c.gauss2}) depends only on $\kappa$ and on the distribution of $l_0$, and satisfies $\tau(\kappa) = 0$ if, and only if, $\kappa=0$. If, in addition, $l_0^2 = 1$, almost surely, then $\tau(\kappa) = \sqrt{\kappa/(1-\kappa)}$.
\end{proposition}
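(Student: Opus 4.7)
The plan is to reduce everything to a canonical form via the equivariance of ordinary least squares, and then invoke random-matrix-theoretic asymptotics for the weighted sample covariance $\tilde X'\tilde X$. Setting $\tilde x_i = l_i(v_{i1},\ldots,v_{ip})' = \Sigma^{-1/2}x_i$ and $\tilde X = [\tilde x_1,\ldots,\tilde x_n]'$, so that $X = \tilde X\,\Sigma^{1/2}$, a direct computation gives
$$\Sigma^{1/2}(\hat\beta_{LS}-\beta)/\sigma \;=\; (\tilde X'\tilde X)^\dagger\tilde X' u \;=:\; Z,$$
and since $|l_i|\ge c>0$ and $p/n\to\kappa<1$, the Bai--Yin theorem ensures $\tilde X'\tilde X$ is a.s. invertible for large $n$, so that $^\dagger$ may be replaced by $^{-1}$. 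Crucially the distribution of $Z$ does not depend on $(\beta,\sigma,\Sigma)$, which collapses the suprema in \ref{c.gauss} and \ref{c.general} to unconditional statements about $Z$; the permutation invariance \ref{c.gauss}.(\ref{c.gauss1}) is immediate from the OLS formula.

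For \ref{c.gauss}.(\ref{c.gauss2}) I would first condition on $\tilde X$ and use $\E[u_0^2]=1$, $\E[u_0^4]<\infty$ to get
\begin{align*}
\E[\|Z\|_2^2\mid \tilde X] &\;=\; \trace\bigl((\tilde X'\tilde X)^{-1}\bigr), \\
\Var(\|Z\|_2^2\mid \tilde X) &\;\le\; C\,\trace\bigl((\tilde X'\tilde X)^{-2}\bigr).
\end{align*}
Conditioning further on $L=\diag(l_1,\ldots,l_n)$, $\tilde X'\tilde X = V'L^2V$ is a sum of independent weighted rank-one matrices, and a Marchenko--Pastur/Silverstein result yields a.s. convergence of its empirical spectral distribution to a deterministic law $F_\kappa$ depending only on $\kappa$ and the law of $l_0^2$; since $|l_0|\ge c$ and $\kappa<1$, Bai--Yin gives $\liminf \lambda_{\min}(\tilde X'\tilde X)/n > 0$ a.s., so $F_\kappa$ is supported away from zero. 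Hence $\trace((\tilde X'\tilde X)^{-1})\to \tau^2(\kappa) := \kappa\int x^{-1}dF_\kappa(x)$ and $\trace((\tilde X'\tilde X)^{-2})\to 0$, giving $\|Z\|_2\to\tau(\kappa)$ in probability. The case $\kappa=0$ follows from the crude bound $\trace((\tilde X'\tilde X)^{-1})\le p/\lambda_{\min}(\tilde X'\tilde X)=O(p/n)\to 0$, and $\tau(\kappa)>0$ for $\kappa>0$ because the integrand is bounded away from zero on the support of $F_\kappa$. The explicit formula $\tau^2(\kappa)=\kappa/(1-\kappa)$ under $l_0^2\equiv 1$ then reduces to the classical identity $\int x^{-1}dF_{MP,\kappa}(x)=1/(1-\kappa)$ for the Marchenko--Pastur law.

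For \ref{c.gauss}.(\ref{c.gauss3}) I would apply the Sherman--Morrison identity to $\tilde X'\tilde X = \tilde X_{(1)}'\tilde X_{(1)} + \tilde x_1\tilde x_1'$; writing $B=(\tilde X_{(1)}'\tilde X_{(1)})^{-1}$, a short calculation yields the exact leave-one-out formula
$$Z - Z_{(1)} \;=\; \frac{B\tilde x_1\bigl(u_1 - \tilde x_1' Z_{(1)}\bigr)}{1+\tilde x_1' B\tilde x_1}.$$
Since $\tilde x_1 = l_1 v_1$ is independent of $(B,Z_{(1)},u_1)$, with $\|B\|_{op}=O(1/n)$ and $\trace(B)$ bounded, one obtains $\E[\|B\tilde x_1\|_2^2\mid B] = \E[l_0^2]\trace(B^2) = O(1/n)$, while the numerator factor and the denominator are both $O_p(1)$; hence $\|Z - Z_{(1)}\|_2 = O_p(n^{-1/2})\to 0$. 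Finally, \ref{c.general} holds with $\delta=2$ by a fourth-moment calculation: by Marcinkiewicz--Zygmund applied to each coordinate $Z_j = \sum_i w_{ji} u_i$ (whose squared weights satisfy $\sum_i w_{ji}^2=[(\tilde X'\tilde X)^{-1}]_{jj}$),
$$\E[\|Z\|_4^4\mid \tilde X] \;\le\; C\sum_j [(\tilde X'\tilde X)^{-1}]_{jj}^2 \;\le\; C\,\|(\tilde X'\tilde X)^{-1}\|_{op}\cdot \trace\bigl((\tilde X'\tilde X)^{-1}\bigr) \;=\; O(1/n),$$
so $\|Z\|_4\to 0$ in probability. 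The main technical hurdle is the random-matrix step: one needs both convergence of $(1/p)\trace((V'L^2V/n)^{-1})$ under the row-weighting induced by $L$ and a strictly positive liminf for $\lambda_{\min}(V'L^2V)/n$, which is precisely what the mild assumption $|l_0|\ge c>0$, combined with $\kappa<1$, delivers through generalized Marchenko--Pastur/Bai--Yin results.
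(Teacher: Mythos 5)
Your proposal follows essentially the same route as the paper's proof: the equivariance reduction to $\beta=0$, $\sigma=1$, $\Sigma=I$, the conditional mean/variance computation that reduces Condition (C1)(b) to trace asymptotics for $(\tilde X'\tilde X)^{-1}$ and $(\tilde X'\tilde X)^{-2}$ via the Bai--Yin theorem and Silverstein's generalized Marchenko--Pastur result (the content of the paper's Lemma on $\trace(X'X)^{\dagger m}$), the Sherman--Morrison leave-one-out identity for (C1)(c), and a conditional fourth-moment bound of order $O(1/n)$ for (C2). The only differences are cosmetic (e.g., Marcinkiewicz--Zygmund together with $\sum_j A_{jj}^2\le\|A\|_{op}\trace A$ instead of the paper's direct expansion bounding $\E[\|\hat\beta\|_4^4\mid X]$ by $\trace\bigl((X'X)^{\dagger 2}\bigr)$, and almost-sure eventual invertibility in place of the explicit event $A_n$), so the argument is correct as proposed.
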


Finally, we point out that if Conditions~\ref{c.gauss} and \ref{c.general} hold over smaller parameter spaces $\Theta_n \subseteq \R^{p_n}\times (0,\infty)\times S_{p_n}$, then $PI_\alpha^{(L1O)}$ can be shown to have the property \eqref{eq:honest} over this sequence of smaller parameter spaces $\Theta_n$. This is of particular interest, for example, in the case where $p_n> n$ and penalized estimators like the LASSO have to be used. These estimators are typically uniformly consistent over appropriately sparse parameter spaces \citep[cf.][Section 2.4.2]{Buehlmann11}. A qualitatively different parameter space is considered, e.g., in \citet{Lopes15} who shows uniform consistency of ridge regularized estimators under a boundedness assumption on $\|\beta\|_2$ and a specific decay rate of eigenvalues of $\Sigma$. Clearly, these regularized estimators can not satisfy \ref{c.gauss} over the full unrestricted parameter space $\R^{p_n}\times (0,\infty)\times S_{p_n}$.


\section{Concluding remarks}
\label{sec:conclusion}

In this paper we have suggested a generic construction of prediction intervals in linear regression based on leave-one-out residuals. The proposed method has asymptotic conditional coverage probability equal to the nominal level, uniformly in the unknown parameters, even if $p_n$ is allowed to increase with $n$. The leading case we have in mind is $p_n/n\to \kappa\in(0,1)$, as studied in \citet{ElKaroui13b} in the context of $M$-estimation, but our method applies also in the more classical large sample case where $p_n/n\to 0$ as well as in the very high-dimensional case where $p_n\gg n$, provided that regularized estimators are used and the parameter space is restricted appropriately. Our method applies for a large class of estimators including robust $M$-estimators, James-Stein type shrinkage estimators and regularized estimators like LASSO and ridge regression. Moreover, under more restrictive assumptions on the error distribution, the lengths of our intervals adapt to the performance of the underlying estimator used to obtain a linear point prediction in the sense that more accurate predictors (in mean squared prediction error) lead to shorter prediction intervals. This means that our intervals can also be used to evaluate the relative efficiency of competing predictors.

Despite these desirable properties many questions about the leave-one-out prediction interval remain open. First of all, it would be interesting to characterize more explicitly the class of estimators that satisfies Conditions~\ref{c.gauss} and \ref{c.general} needed for our results. Some leading examples have been discussed in Section~\ref{sec:estimators} and further investigations are in progress. 

Moreover, from a practical point of view it is desirable to obtain a rate for the convergence in \eqref{eq:honest}. This would require also a rate in Condition~\ref{c.gauss}.(\ref{c.gauss2}). Moreover, fast estimation of the conditional quantiles of the prediction error would be needed (cf. Section~\ref{sec:quantiles}), which may not be achievable with leave-one-out residuals because of dependence. One idea currently under investigation to resolve this problem is sample splitting. One may simply calculate the estimator $\hat{\beta}_n^{(\nu)}$ only from the first $\lceil \nu n\rceil$ observations of the training sample, where $\nu\in(0,1)$, and use the remaining observations $(y_i,x_i)_{i=\lceil{\nu n\rceil+1}}^n$ to generate residuals $\check{u}_i = y_i - x_i'\hat{\beta}_n^{(\nu)}$ which are conditionally i.i.d. given $\hat{\beta}_n^{(\nu)}$, and which can then be used to estimate the quantiles of the prediction error $y_0-x_0'\hat{\beta}_n^{(\nu)}$. Of course, the price to be paid for this independence is a loss of prediction accuracy, because a smaller sample of size $\lceil \nu n\rceil$ is used to calculate the point prediction.

Finally, the leave-one-out idea for predictive inference in high-dimensional regression problems seems to be applicable far beyond the linear model. But extensions in that direction are left for future research projects.

\section*{Acknowledgements}

The authors thank the participants of the ``ISOR Research Seminar in Statistics and Econometrics" at the University of Vienna for discussion of an early version of the paper. In particular, we want to thank Benedikt P\"otscher and David Preinerstorfer for valuable comments.
The Austrian Science Fund (FWF) supports the first author through project P 28233-N32 and the second author through projects P 28233-N32 and P 26354-N26.

\begin{appendix}

\section{Proofs of main results}

Whenever a quantity that depends on the training sample carries a subscript $(i)$, then it is computed from the training sample where the $i$-th observation pair $(y_i, x_i)$ has been removed. For example, $Y_{(i)} = (y_1,\dots, y_{i-1},y_{i+1},\dots, y_n)'$, $X_{(i)} = [x_1,\dots, x_{i-1}, x_{i+1},\dots, x_n]'$, $\hat{\beta}_{(i)} = \hat{\beta}(Y_{(i)}, X_{(i)})$. Similarly, a double subscript $(ij)$ means that both the $i$-th and the $j$-th observation have been removed. By $\hat{u}_i$ we denote the $i$-th regression residual and by $\tilde{u}_i$ we denote the $i$-th leave-one-out residual, i.e., $\hat{u}_i = y_i - x_i'\hat{\beta}$ and $\tilde{u}_i = y_i - x_i'\hat{\beta}_{(i)}$. The empirical distribution function of the leave-one-out residuals $\tilde{u}_i$ will be denoted by $\tilde{F}_n$. We also write $\tilde{u}_{i(j)} = y_i - x_i'\hat{\beta}_{(ij)}$. Finally, for a cumulative distribution function $F: \R \to [0,1]$, we define the corresponding quantile function by $F^\dagger(t) := \inf\{u\in\R : F(u) \ge t\}$.

\subsection{Proof of Theorem~\ref{thm:L1OPI}}
\label{sec:proofL1OPI}

In order to achieve uniformity in $\beta\in\R^{p_n}$, $\sigma^2>0$ and $\Sigma\in S_{p_n}$ we always consider trending parameter asymptotics, i.e., $\beta = \beta_n\in\R^{p_n}$, $\sigma^2 = \sigma_n^2 >0$ and $\Sigma = \Sigma_n\in S_{p_n}$ are sequences/arrays. We also abbreviate the resulting sequence of induced probability measures $\P_{\beta_n,\sigma_n^2,\Sigma_n}$ on the sample space $\R^{(n+1)\times (p_n +1)}$ (training sample plus one prediction period) simply by $\P_n$.

First, by independence of the training sample $Y, X$ from the realizations in the prediction period $(y_0, x_0)$, we have
\begin{align}
\P_n &\left( y_0 \in PI_\alpha^{(L1O)}(Y,X,x_0) \Big| Y,X \right) \notag
=
\P_n \left( \tilde{q}_{n,\alpha/2} \le y_0 - \hat{\beta}'x_0 \le \tilde{q}_{n,1-\alpha/2} \Big| Y,X\right)\notag\\
&=
\P_n \left( (\beta - \hat{\beta})'x_0 + \sigma u_0 \le \tilde{q}_{n,1-\alpha/2} \Big| Y,X\right)\notag\\
&\quad\quad- 
\P_n \left( (\beta - \hat{\beta})'x_0 + \sigma u_0 < \tilde{q}_{n,\alpha/2} \Big| Y,X\right)\notag\\
&=
F_{b=\Sigma^{1/2}(\beta-\hat{\beta})/\sigma} \left(\tilde{q}_{n,1-\alpha/2}/\sigma\right)
-
F_{b=\Sigma^{1/2}(\beta-\hat{\beta})/\sigma} \left((\tilde{q}_{n,\alpha/2}/\sigma)^-\right), \label{eq:Fb}
\end{align}
where, for $b\in\R^p$, $F_b(t) := \P_n(b'\Sigma^{-1/2}x_0 + u_0 \le t)$ does not depend on $\beta$, $\sigma^2$ and $\Sigma$. Notice that $F_{b=\Sigma^{1/2}(\beta-\hat{\beta})/\sigma}(t) = \P_n( (y_0-x_0'\hat{\beta})/\sigma \le t | Y,X)$ is the distribution function of the scaled conditional prediction error.

The proof now proceeds in two steps.
\begin{enumerate}
\item \label{step1} Show that $\sup_{t\in\R} |F_{b=\Sigma^{1/2}(\beta-\hat{\beta})/\sigma}(t) - F(t)| \to 0$, in probability, where $F(t) = \P_n(l_0 N \tau + u_0 \le t)$ does not depend on $n$, $N\thicksim \mathcal N(0,1)$ is independent of $(l_0,u_0)$ and $\tau\in[0,\infty)$ is as in \ref{c.gauss}.(\ref{c.gauss2}).\footnote{The idea that $(y_0-x_0'\hat{\beta})/\sigma$ should be approximately distributed like $l_0N\tau + u_0$ is inspired by the arguments in \citet{ElKaroui13b} (see page 14562).}
\item \label{step2} Show that the scaled empirical leave-one-out quantiles $\tilde{q}_{n,\alpha}/\sigma$ converge in probability to the corresponding $\alpha$-quantile $q_\alpha$ of $F$.
\end{enumerate}

Note that under either set of assumptions of Theorem~\ref{thm:L1OPI}, $F$ is continuous and strictly increasing. Indeed, if $\tau=0$, the claim is true by assumption. For $\tau>0$ and for $a\in\R$ fixed, $F(a) - F(a^-) = \P_n(l_0 N \tau + u_0 = a) = \E_n[\P_n(N = (a-u_0)/(l_0 \tau) | u_0, l_0)] = 0$ because $|l_0|\ge c$. Moreover, for $a_1< a_2$, $F(a_2) - F(a_1) = \E_n[\P_n(\sign(l_0)N \in [(a_1-u_0)/(|l_0|\tau), (a_2-u_0)/(|l_0|\tau)]|u_0,l_0)] > 0$. Therefore, $q_\alpha = F^{-1}(\alpha)$ is uniquely determined.

The following simple result is somewhat at the heart of the present argument. Its proof is deferred to the appendix. 

\begin{lemma} \label{lemma:UnifWeak}
Fix arbitrary positive constants $\tau\in[0,\infty)$ and $\delta\in(0,2]$ and let $(p_n)_{n\in\N}$ be a sequence of positive integers. Let $u_0$ and $l_0$ be random variables and let $V_0 = (v_{0j})_{j=1}^\infty$ be a sequence of i.i.d. random variables such that $V_0$, $u_0$ and $l_0$ are jointly independent, $|l_0|\ge c>0$, $\E[l_0^2]=1$, $\E[v_{01}]=0$, $\E[v_{01}^2]=1$ and $\E[|v_{01}|^{2+\delta}]<\infty$. For $n\in\N$, define $v_n = (v_{01},\dots, v_{0p_n})'$, $F_{b,n}(t) = \P(l_0b'v_n + u_0\le t)$ and $F(t) = \P(l_0N\tau + u_0\le t)$, where $N\thicksim \mathcal N(0,1)$ is independent of $(l_0,u_0)$. Consider positive sequences $g_1, g_2:\N \to (0,1)$, such that $g_j(n)\to 0$, as $n\to\infty$, $j=1,2$. If $\tau=0$, assume in addition that $t\mapsto \P(u_0\le t)$ is continuous. If $\tau>0$, assume in addition that $p_n\to\infty$ as $n\to\infty$. Then, using the convention that $\sup \emptyset = 0$,
\begin{align} \label{eq:Lemma:UnifWeak}
\sup_{b\in B_n} \sup_{t\in\R} \left| F_{b,n}(t) - F(t) \right| \;\xrightarrow[n\to\infty]{}\; 0,
\end{align}
where the set $B_n$ can be chosen as follows: 
If $\tau=0$, $B_n = B_n^{(0)} = \{b\in\R^{p_n}: \|b\|_2 \le g_1(n)\}$, if $\tau>0$, then $B_n = B_n^{(1)} = \{b\in\R^{p_n} : b \ne 0, | \|b\|_2 - \tau| \le g_1(n), \|b\|_{2+\delta}/\|b\|_2 \le g_2(n) \}$, and if $\tau>0$ and $v_{0j} \thicksim \mathcal N(0,1)$, then $B_n = B_n^{(2)} = \{b\in\R^{p_n} : | \|b\|_2 - \tau| \le g_1(n)\}$.
\end{lemma}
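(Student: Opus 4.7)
The plan is to argue by contradiction and subsequence extraction, reducing to pointwise weak convergence via Polya's theorem. If \eqref{eq:Lemma:UnifWeak} fails, there exist $\eps>0$, a subsequence $(n_k)$, and $b_{n_k}\in B_{n_k}$ with $\sup_t|F_{b_{n_k},n_k}(t)-F(t)|\ge\eps$. Since the continuity and strict monotonicity of $F$ were already verified in the paragraph preceding the lemma, Polya's theorem says uniform convergence in $t$ follows from pointwise convergence; equivalently, it is enough to prove that for every sequence $b_n\in B_n$ one has $l_0 b_n'v_n+u_0\Rightarrow l_0 N\tau+u_0$. By independence of $u_0$ from $(l_0,V_0)$ and Slutsky, this reduces further to $l_0 b_n'v_n\Rightarrow l_0 N\tau$.

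The case $\tau=0$ is immediate: under $B_n^{(0)}$, $\E[(l_0 b_n'v_n)^2]=\|b_n\|_2^2\le g_1(n)^2\to 0$, so $l_0 b_n'v_n\to 0$ in $L^2$. For $\tau>0$ I proceed through characteristic functions, first conditioning on $l_0=\ell$. Under $B_n^{(2)}$ (Gaussian $v_{0j}$), $b_n'v_n\sim\mathcal N(0,\|b_n\|_2^2)$ exactly and $\|b_n\|_2\to\tau$, yielding the conditional characteristic function $\exp(-s^2\ell^2\|b_n\|_2^2/2)\to\exp(-s^2\ell^2\tau^2/2)$. Under $B_n^{(1)}$, the Lyapunov ratio for $\sum_j\ell b_{n,j}v_{0j}$ equals
\begin{equation*}
\E|v_{01}|^{2+\delta}\left(\frac{\|b_n\|_{2+\delta}}{\|b_n\|_2}\right)^{2+\delta}\;\le\;\E|v_{01}|^{2+\delta}\,g_2(n)^{2+\delta}\;\longrightarrow\;0,
\end{equation*}
where the factors of $|\ell|^{2+\delta}$ have cancelled between numerator and denominator; Lyapunov's CLT then gives the same conditional limit for the characteristic function of $l_0 b_n'v_n$.

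Unconditioning is handled by bounded convergence:
\begin{equation*}
\E\!\left[\exp(is l_0 b_n'v_n)\right]=\E\!\left[\E\!\left[\exp(is l_0 b_n'v_n)\,\big|\,l_0\right]\right]\;\longrightarrow\;\E\!\left[\exp(-s^2 l_0^2\tau^2/2)\right]=\E\!\left[\exp(is l_0 N\tau)\right],
\end{equation*}
with $N\sim\mathcal N(0,1)$ independent of $l_0$ in the last identity. L\'evy's continuity theorem then delivers $l_0 b_n'v_n\Rightarrow l_0 N\tau$, completing the proof.

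The main obstacle is arranging the conditional CLT so that it interacts correctly with the randomness in $l_0$. Two observations make everything fit: (i) the Lyapunov ratio for $\sum_j\ell b_{n,j}v_{0j}$ is \emph{independent of $\ell$}, so the defining constraint of $B_n^{(1)}$ is exactly what is needed, with no separate verification for a.e.\ realization of $l_0$; and (ii) the hypothesis $|l_0|\ge c>0$ prevents the conditional variance $\ell^2\|b_n\|_2^2$ from collapsing and, together with $\tau>0$, secures the continuity of $F$ that Polya's theorem requires. Beyond these structural points, the proof is an assembly of standard tools (subsequence contradiction, Slutsky, L\'evy, dominated convergence).
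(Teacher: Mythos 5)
Your proof is correct and follows essentially the same route as the paper's: the same case split over $B_n^{(0)}$, $B_n^{(1)}$, $B_n^{(2)}$, the same Lyapounov ratio bound $\E|v_{01}|^{2+\delta}(\|b_n\|_{2+\delta}/\|b_n\|_2)^{2+\delta}\le \E|v_{01}|^{2+\delta}g_2(n)^{2+\delta}$, Polya's theorem for uniformity in $t$, and the arbitrary-sequence (equivalently, subsequence-contradiction) device for uniformity over $B_n$. The only cosmetic difference is that you condition on $l_0$ and pass through characteristic functions with bounded convergence, whereas the paper applies the CLT to $b_n'v_n/\|b_n\|_2$ unconditionally and then invokes independence of $(l_0,u_0)$ directly; both are valid and equivalent in substance.
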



\subsubsection{Convergence of the conditional prediction error distribution} 

Lemma~\ref{lemma:UnifWeak} applies under both parts of Theorem~\ref{thm:L1OPI}, with an appropriate choice of the set $B_n$ corresponding to the three cases $(0)$ $\tau=0$, $(1)$ $\tau>0$ and \ref{c.general} holds, and $(2)$ $\tau>0$ and $v_{ij}\thicksim \mathcal N(0,1)$. This establishes the desired result of step \ref{step1}, i.e., 
\begin{align} \label{eq:UnifWeak}
\sup_{t\in\R}\left| F_{b=\Sigma^{1/2}(\beta-\hat{\beta})/\sigma} (t) 
	- F(t)\right| \;\xrightarrow[n\to\infty]{i.p.}\; 0,
\end{align}
because for $\eps>0$,
\begin{align*}
&\P_n\left( \sup_{t\in\R}\left| F_{b=\Sigma^{1/2}(\beta-\hat{\beta})/\sigma} (t) 
	- F(t)\right| > \eps \right) \\
&\quad\le
\P_n\left( \sup_{t\in\R}\left| F_{b=\Sigma^{1/2}(\beta-\hat{\beta})/\sigma} (t) 
	- F(t)\right| > \eps, \Sigma^{1/2}(\beta-\hat{\beta})/\sigma\in B_n \right) \\
&\hspace{4cm}+
\P_n\left( \Sigma^{1/2}(\beta-\hat{\beta})/\sigma\notin B_n \right)\\
&\quad\le
a_n(\eps) \;+\; \P_n\left( \Sigma^{1/2}(\beta-\hat{\beta})/\sigma\notin B_n \right),
\end{align*}
where $a_n(\eps) = 1$ if $\sup_{b\in B_n} \sup_{t\in\R} \left| F_b(t) - F(t) \right|>\eps$, and $a_n(\eps) = 0$, else.
Choosing the sequences $g_1$ and $g_2$ defining the sets $B_n^{(0)}$, $B_n^{(1)}$ and $B_n^{(2)}$ such that they go to zero sufficiently slowly, we see that, under either set of assumptions of the theorem, the expression on the last line of the previous display converges to zero as $n\to\infty$. Since the same argument applies also when $\hat{\beta}$ is calculated only from a sample of size $n-1$ or $n-2$, \eqref{eq:UnifWeak} also holds for $\hat{\beta}_{(i)}$ or $\hat{\beta}_{(ij)}$ instead of $\hat{\beta}$.

With the result in \eqref{eq:UnifWeak} we can approximate the expression in \eqref{eq:Fb} by
\begin{align}
F(\tilde{q}_{n,1-\alpha/2}/\sigma) - F(\tilde{q}_{n,\alpha/2}/\sigma).
\end{align}

\subsubsection{Consistency of the leave-one-out quantiles}
\label{sec:quantiles}

As an intermediate step, consider the distribution function $F_n(t) := \P_n(\tilde{u}_1/\sigma\le t) = \E_n[\P_n(x_1'(\beta-\hat{\beta}_{(1)})/\sigma + u_1 \le t | \hat{\beta}_{(1)})] = \E_n[ F_{b = \Sigma^{1/2}(\beta-\hat{\beta}_{(1)})/\sigma}(t)]$ and note that
\begin{align}\label{eq:FntoF}
\sup_{t\in\R} | F_n(t) -F(t) | \le \E_n\left[ \sup_{t\in\R} \left| F_{b = \Sigma^{1/2}(\beta-\hat{\beta}_{(1)})/\sigma}(t) - F(t)\right| \right]
\xrightarrow[n\to\infty]{} 0,
\end{align}
because the integrand is bounded and converges to zero by \eqref{eq:UnifWeak} with $\hat{\beta}_{(1)}$ replacing $\hat{\beta}$. 

To deal with the empirical quantiles we use a standard argument. For $\eps>0$, consider
$$
\P_n(|\tilde{q}_{n,\alpha}/\sigma - q_{\alpha}| > \eps)
=
\P_n(\tilde{q}_{n,\alpha}/\sigma > q_{\alpha} + \eps)
+
\P_n(\tilde{q}_{n,\alpha}/\sigma < q_{\alpha} - \eps).
$$
To bound the first probability on the right, abbreviate $J_i = \mathbf 1_{\{\tilde{u}_i/\sigma > q_{\alpha} + \eps\}}$ and note that by virtue of Condition~\ref{c.gauss}.(\ref{c.gauss1}), the $(\tilde{u}_i)_{i=1}^n$, and thus also the $(J_i)_{i=1}^n$ are exchangeable. A basic property of the quantile function \citep[cf.][Lemma 21.1]{vanderVaart07} yields
\begin{align*}
\P(\tilde{q}_{n,\alpha}/\sigma &> q_{\alpha} + \eps)
= \P\left(\alpha > \tilde{F}_n(\sigma(q_{\alpha} + \eps))\right)\\
&= \P\left(1-\tilde{F}_n(\sigma(q_{\alpha} + \eps)) > 1-\alpha \right)\\
&=
\P\left(\frac{1}{n}\sum_{i=1}^n (J_i - \E[J_1]) > 1-\alpha - \E[J_1] \right)\\
&=
\P\left(\frac{1}{n}\sum_{i=1}^n (J_i - \E[J_i]) >  F_n(q_{\alpha} + \eps) - \alpha \right).
\end{align*}
Since $F_n(q_{\alpha} +\eps) \to F(q_\alpha +\eps)>\alpha$, as $n\to\infty$, by \eqref{eq:FntoF}, the probability in the preceding display can be bounded, at least for $n$ sufficiently large, using Markov's inequality, by
\begin{align*}
&(F_n(q_{\alpha} + \eps) - \alpha)^{-2} \E\left[\left| \frac{1}{n}\sum_{i=1}^n (J_i - \E[J_i])\right|^2 \right]\\
&\quad=
(F_n(q_{\alpha} + \eps) - \alpha)^{-2} \left( 
\frac{1}{n} \Var[J_1] + \frac{n(n-1)}{n^2} \Cov(J_1,J_2)
\right),
\end{align*}
where the equality holds in view of the exchangeability of $J_i$. An analogous argument yields a similar upper bound for the probability  $\P(\tilde{q}_{n,\alpha}/\sigma \le q_{\alpha} - \eps)$ but with $(F_n(q_{\alpha} + \eps) - \alpha)^{-2}$ replaced by $(\alpha - F_n(q_{\alpha} - \eps))^{-2}$, and $J_i$ replaced by $K_i = \mathbf 1_{\{\tilde{u}_i/\sigma \le q_{\alpha} - \eps\}}$. The proof will thus be finished if we can show that $\Cov(J_1,J_2)$ and $\Cov(K_1,K_2)$ converge to zero as $n\to\infty$.

We only consider $\Cov(J_1,J_2) = \Cov(1-J_1,1-J_2)$, as the argument for $\Cov(K_1,K_2)$ is analogous. Write $\delta = q_\alpha + \eps$ and $\Cov(1-J_1,1-J_2) = \P(\tilde{u}_1/\sigma\le \delta, \tilde{u}_2/\sigma \le \delta) - \P(\tilde{u}_1/\sigma\le \delta)\P(\tilde{u}_2/\sigma \le \delta)$. Now,
\begin{align*}
\begin{pmatrix}
\tilde{u}_1/\sigma \\
\tilde{u}_2/\sigma
\end{pmatrix}
=
\begin{pmatrix}
\tilde{u}_{1(2)}/\sigma \\
\tilde{u}_{2(1)}/\sigma
\end{pmatrix}
+
\begin{pmatrix}
\tilde{e}_1 \\
\tilde{e}_2
\end{pmatrix},
\end{align*}
where $\tilde{u}_{i(j)} = y_i - x_i'\hat{\beta}_{(ij)}$ and $\tilde{e}_i = (\tilde{u}_i - \tilde{u}_{i(j)})/\sigma = x_i'(\hat{\beta}_{(ij)} - \hat{\beta}_{(i)})/\sigma$, for $i,j\in\{1,2\}$, $i\ne j$. Then $\E[\tilde{e}_i|Y_{(i)}, X_{(i)}] = 0$, and $\E[\tilde{e}_i^2 | Y_{(i)},X_{(i)}] = \|\Sigma^{1/2}(\hat{\beta}_{(i)} - \hat{\beta}_{(ij)})/\sigma\|_2^2$, which converges to zero in probability, under \ref{c.gauss}.(\ref{c.gauss3}), for a sample of size $n-1$ instead of $n$. Hence $\tilde{e}_1$ and $\tilde{e}_2$ converge to zero in probability. The joint distribution function of $\tilde{u}_{1(2)}/\sigma$ and $\tilde{u}_{2(1)}/\sigma$ can be written as
\begin{align*}
&\P_n(\tilde{u}_{1(2)}/\sigma\le s, \tilde{u}_{2(1)}/\sigma \le t)\\
&\quad=
\E_n\left[ \P_n\left(x_1'(\beta-\hat{\beta}_{(12)})/\sigma + u_1 \le s, x_2'(\beta-\hat{\beta}_{(12)})/\sigma + u_2 \le t \Big| Y_{(12)},X_{(12)}\right)\right]\\
&\quad=
\E_n\left[ F_{b=\Sigma^{1/2}(\beta-\hat{\beta}_{(12)})/\sigma}(s) F_{b=\Sigma^{1/2}(\beta-\hat{\beta}_{(12)})/\sigma}(t) \right],
\end{align*}
which converges to $F(s)F(t)$, because the integrand is bounded and converges by \eqref{eq:UnifWeak} with $\hat{\beta}_{(12)}$ replacing $\hat{\beta}$. 
Consequently, the vector $(\tilde{u}_1/\sigma, \tilde{u}_2/\sigma)$ converges weakly to $(L_1, L_2)$, where $L_1$ and $L_2$ are i.i.d. with cdf $F$. Hence, 
$\Cov(J_1,J_2)\to 0$, as $n\to\infty$, and the proof is finished. \hfill{\qed}


\subsection{Proof of Proposition~\ref{prop:OLS}}
\label{sec:OLS}

Condition~\ref{c.gauss}.(\ref{c.gauss1}) is immediate.
For the remaining conditions notice the identity $\Sigma^{1/2}(\hat{\beta}_n - \beta)/\sigma = (\Sigma^{-1/2}X'X\Sigma^{-1/2})^\dagger\Sigma^{-1/2}X'u$, with $u = (u_1,\dots, u_n)'$, whose distribution does not depend on the parameters $\beta$, $\sigma^2$ and $\Sigma$. Hence, without loss of generality, we assume for the rest of this proof that $\beta=0$, $\sigma^2=1$ and $\Sigma = I_{p_n}$. 

For \ref{c.gauss}.(\ref{c.gauss2}), we have to show that $\|\hat{\beta}_n\|_2 \to \tau \in [0,\infty)$, in probability, for a $\tau = \tau(\kappa)$ as in the proposition. To this end, consider the conditional mean
$$
\E[\|\hat{\beta}_n\|_2^2| X] = \trace (X'X)^\dagger X'X (X'X)^\dagger = \trace (X'X)^\dagger \xrightarrow[]{a.s.} \tau^2,
$$
by Lemma~\ref{lemma:traceConv} and for $\tau$ as desired. From the same lemma we get convergence of the conditional variance
\begin{align*}
\Var[\|\hat{\beta}_n\|_2^2| X] &= \Var[u'X(X'X)^{\dagger2} X'u|X] = \Var[u'Ku|X] \\
&=
2\trace K^2 + (\E[u_1^4] - 3)\sum_{i=1}^n K_{ii}^2\\
&\le
2\trace K^2 + (\E[u_1^4] + 3)\sum_{i,j=1}^n K_{ij}^2
=
(\E[u_1^4] + 5)\trace K^2 \\
&= 
(\E[u_1^4] + 5)
\trace X(X'X)^{\dagger2}X'X(X'X)^{\dagger2}X' \\
&= (\E[u_1^4] + 5)
\trace (X'X)^{\dagger2} \xrightarrow[]{a.s.} 0. 
\end{align*}

To establish \ref{c.gauss}.(\ref{c.gauss3}), we abbreviate $S_1 = X_{(1)}'X_{(1)}$ and consider first the event $A_n = \{\lmin(S_1)>0\}$. On this event, also $\lmin(X'X) = \lmin(S_1 + x_1x_1')> 0$, and the Sherman-Morrison formula yields
\begin{align*}
\hat{\beta}_n &= (X'X)^{-1}X'Y = (S_1 + x_1x_1')^{-1}(X_{(1)}'u_{(1)} + x_1 u_1)\\
&= 
\left(S_1^{-1} - \frac{S_1^{-1}x_1x_1'S_1^{-1}}{1 + x_1'S_1^{-1}x_1} \right) (X_{(1)}'u_{(1)} + x_1 u_1) \\
&=
\hat{\beta}_{(1),n} - \frac{S_1^{-1}x_1x_1'\hat{\beta}_{(1),n}}{1 + x_1'S_1^{-1}x_1} + S_1^{-1}x_1u_1 - S_1^{-1}x_1u_1\frac{x_1'S_1^{-1}x_1}{1 + x_1'S_1^{-1}x_1}\\
&=
\hat{\beta}_{(1),n} + \frac{S_1^{-1}x_1(u_1-x_1'\hat{\beta}_{(1),n})}{1 + x_1'S_1^{-1}x_1},
\end{align*}
and thus, $\|\hat{\beta}_n - \hat{\beta}_{(1),n}\|_2^2 = (1+x_1'S_1^{-1}x_1)^{-2} x_1'S_1^{-2}x_1 (u_1 - x_1'\hat{\beta}_{(1),n})^2 \le 2 (u_1^2 + (x_1'\hat{\beta}_{(1),n})^2) x_1'S_1^{\dagger2}x_1$. Clearly, the squared error term $u_1^2$ is bounded in probability because $\E[u_1^2] = 1$; $\E[(x_1'\hat{\beta}_{(1),n})^2|\hat{\beta}_{(1),n}] = \|\hat{\beta}_{(1),n}\|_2^2 \to \tau$, in probability, as above, which implies that $(x_1'\hat{\beta}_{(1),n})^2 = O_\P(1)$; and $\E[x_1'S_1^{\dagger2}x_1|S_1] = \trace S_1^{\dagger2} \to 0$, in probability, by Lemma~\ref{lemma:traceConv}. Therefore, we have $\P(\|\hat{\beta}_n - \hat{\beta}_{(1),n}\|_2^2>\eps, A_n) \le \P(2O_\P(1)o_\P(1)>\eps, A_n) \to 0$. Moreover, $\P(A_n^c) = \P(\lmin(S_1)=0) \to 0$, in view of Lemma~\ref{lemma:traceConv}.

Finally, for \ref{c.general} it suffices to show that $\|\hat{\beta}_n\|_4^4 \to 0$, in probability. Notice that for $M = (m_1, \dots, m_{p_n})' = (X'X)^\dagger X'$, we have
\begin{align*}
\|\hat{\beta}_n\|_4^4 = \|Mu\|_4^4 = \sum_{j=1}^{p_n} (m_j'u)^4
=
\sum_{j=1}^{p_n} \sum_{i_1,i_2,i_3,i_4=1}^n m_{ji_1}m_{ji_2}m_{ji_3}m_{ji_4}u_{i_1}u_{i_2}u_{i_3}u_{i_4}.
\end{align*}
After taking conditional expectation given $X$, only terms with paired indices remain and we get
\begin{align*}
\E[\|\hat{\beta}_n\|_4^4|X] &= \sum_{j=1}^{p_n}\left( \E[u_1^4] \sum_{i=1}^n m_{ji}^4 + 3 \sum_{i\ne k}^n m_{ji}^2m_{jk}^2\right)\\
&\le
\sum_{j=1}^{p_n}\left( \E[u_1^4] \sum_{i,k=1}^n m_{ji}^2m_{jk}^2 + 3 \sum_{i, k=1}^n m_{ji}^2m_{jk}^2\right)\\
&=
(\E[u_1^4] + 3) \sum_{j=1}^{p_n} \|m_j\|_2^4
\le
(\E[u_1^4] + 3) \trace \sum_{i,j=1}^{p_n} m_im_i'm_jm_j'\\
&=
(\E[u_1^4] + 3) \trace (M'M)^2 
= 
(\E[u_1^4] + 3) \trace (X'X)^{\dagger2} \xrightarrow[]{i.p.} 0,
\end{align*}
by Lemma~\ref{lemma:traceConv}.\hfill{\qed}


\subsection{Auxiliary results}
\label{sec:aux}

\begin{proof}[Proof of Lemma~\ref{lemma:UnifWeak}]
First, if $\tau=0$, for every $n\in\N$, take $b_n \in B_n = B_n^{(0)}$ and simply note that $l_0b_n'v_n \to 0$, in probability, and thus $F_{b_n,n}(t) \to F(t)$. Since the limit is continuous, Polya's theorem yields uniform convergence in $t\in\R$. Since $b_n\in B_n$ was arbitrary, we also get uniform convergence over $B_n$.

Next, we consider the gaussian case with $\tau>0$ and take $B_n = B_n^{(2)} = \{b\in\R^p : | \|b\|_2 - \tau| \le g_1(n)\}$. For every $n\in\N$, choose $b_n\in B_n$ arbitrary, and note that $F_{b_n,n}$ is the distribution function of $l_0 b_n'v_n + u_0$, where $v_n \thicksim \mathcal N(0,I_{p_n})$, and $l_0, v_n, u_0$ are independent. Clearly, $l_0 b_n'v_n + u_0 \thicksim l_0 N \|b_n\| + u_0 \to l_0 N \tau + u_0$, weakly, and this limit has continuous distribution function $F$. Hence, by Polya's theorem, $\sup_t |\P(l_0 b_n'v_n + u_0 \le t) - F(t)| \to 0$, as $n\to \infty$. And since the sequence $b_n\in B_n$ was arbitrary, the result follows.

In the general case, let $B_n = B_n^{(1)} = \{b\in\R^p : b \ne 0, | \|b\|_2 - \tau| \le g_1(n), \|b\|_{2+\delta}/\|b\|_2 \le g_2(n) \}$ and first note that $B_n$ may be empty. If $B_n$ is empty only for finitely many indices $n$, then it suffices to consider the non-empty case. If $B_n$ is only finitely many times non-empty, then the desired convergences follows trivially from our convention that $\sup \emptyset = 0$. If $B_n$ is infinitely many times empty and also infinitely many times non-empty, then we restrict to the infinite subsequence $n'$ such that $B_{n'}\ne \emptyset$. It suffices to show that the convergence in \eqref{eq:Lemma:UnifWeak} holds along $n'$. For convenience, we write $n=n'$.
So let $b_n\in B_n$ and define the triangular array $z_{nj} := b_{nj} v_{0j}$, $j=1,\dots, p_n$, which satisfies $\E[z_{nj}]=0$ and $s_n^2 := \sum_{j=1}^p \E[z_{nj}^2] = \|b_n\|_2^2 \ne 0$. The Lyapounov condition is verified by
\begin{align*}
\sum_{j=1}^{p_n} s_n^{-(2+\delta)} \E[|z_{nj}|^{2+\delta}] 
\;&=\;
\E\left[|v_{01}|^{2+\delta}\right]\left(\frac{\|b_n\|_{2+\delta}}{\|b_n\|_2}\right)^{2+\delta}\\
\;&\le\;
\E\left[|v_{01}|^{2+\delta}\right] \left[ g_2(n)\right]^{2+\delta} 
\; \xrightarrow[n\to\infty]{} \; 0.
\end{align*}
Therefore, by Lyapounov's CLT \citep[][Theorem~27.3]{Billingsley95}, we have
$$
b_n'v_n/\|b_n\|_2 \;=\; \sum_{j=1}^{p_n} z_{nj}/s_n \;\xrightarrow[n\to\infty]{w}\; \mathcal N(0,1).
$$
Since $b_n\in B_n$, we must have $\|b_n\|_2 \to \tau$ as $n\to\infty$, and thus, $b_n'v_n = \|b_n\|_2 b_n'v_n/\|b_n\|_2 \xrightarrow[]{w} N\tau$, where $N \thicksim \mathcal N(0,1)$, as $n\to\infty$, and, by independence, $l_0 b_n'v_n + u_0 \xrightarrow[]{w} l_0 N \tau + u_0$. Since the distribution function of this limit is continuous, Polya's theorem yields $\sup_t | F_{b_n,n}(t) - F(t)| \to 0$, as $n\to\infty$. Now the proof is finished because this convergence holds for arbitrary sequences $b_n\in B_n$.
\end{proof}

\begin{lemma}
\label{lemma:traceConv}
Let $\{l_i : i=1,2,\dots\}$ be a sequence of i.i.d. random variables satisfying $|l_1|\ge c>0$, and let $\{v_{ij} : i,j=1,2,\dots\}$ be a double infinite array of i.i.d. random variables with mean zero, unit variance and $\E[v_{11}^4]<\infty$. For positive integers $p\le n$, consider the $n\times p$ random matrix $X = \Lambda V$, where $\Lambda = \diag(l_1,\dots, l_n)$ is diagonal and $V = \{v_{ij} : i=1,\dots, n; j=1,\dots, p\}$. Let $(X'X)^\dagger$ denote the Moore-Penrose pseudo inverse of $X'X$. If $p/n\to \kappa\in[0,1)$ then the following holds:
\begin{enumerate}
	\renewcommand{\theenumi}{(\roman{enumi})}
	\renewcommand{\labelenumi}{{\theenumi}} 
	
	\item\label{lemma:traceConvA} $\liminf \lmin (X'X/n) \ge c^2(1-\sqrt{\kappa})^2$, almost surely.
	\item\label{lemma:traceConvB} If $m>1$, then $\trace(X'X)^{\dagger m} \to 0$, almost surely.
	
	\item \label{lemma:traceConvC} $\trace(X'X)^{\dagger} \to \tau^2$ almost surely, for some constant $\tau=\tau(\kappa)\in [0,\infty)$ that depends only on $\kappa$ and on the distribution of $l_1^2$ and satisfies $\tau(\kappa)=0$ if, and only if, $\kappa=0$.
\end{enumerate}
\end{lemma}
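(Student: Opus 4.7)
The plan is to reduce everything to well-known results in random matrix theory by exploiting the factorisation $X = \Lambda V$ with $\Lambda = \diag(l_1,\dots,l_n)$. For any $y\in\R^p$,
\begin{equation*}
y'X'Xy \;=\; (Vy)'\Lambda^2(Vy) \;\ge\; c^2\|Vy\|_2^2 \;=\; c^2\,y'V'Vy,
\end{equation*}
so $\lambda_{\min}(X'X)\ge c^2\lambda_{\min}(V'V)$. Since $V$ has i.i.d. entries with mean zero, unit variance, finite fourth moment, and $p/n\to\kappa\in[0,1)$, the Bai-Yin strong limit theorem gives $\lambda_{\min}(V'V/n)\to(1-\sqrt{\kappa})^2$ almost surely, which proves part~\ref{lemma:traceConvA}.

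Part~\ref{lemma:traceConvB} is then immediate: on the probability-one event from part~\ref{lemma:traceConvA} there exists a random $n_0$ such that $\lambda_{\min}(X'X)\ge c_1 n$ for all $n\ge n_0$, where $c_1 := c^2(1-\sqrt{\kappa})^2/2 > 0$. Hence
\begin{equation*}
\trace(X'X)^{\dagger m} \;\le\; p\,\lambda_{\min}(X'X)^{-m} \;\le\; c_1^{-m}\,\frac{p}{n}\cdot \frac{1}{n^{m-1}},
\end{equation*}
which tends to zero whenever $m>1$, since $p/n\to\kappa<\infty$.

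For part~\ref{lemma:traceConvC}, the starting identity is
\begin{equation*}
\trace(X'X)^{\dagger} \;=\; \frac{1}{n}\sum_{i=1}^p \lambda_i(X'X/n)^{-1} \;=\; \frac{p}{n}\int_0^\infty \lambda^{-1}\,dF^{X'X/n}(\lambda),
\end{equation*}
where $F^{X'X/n}$ denotes the empirical spectral distribution of the weighted sample covariance matrix $X'X/n = n^{-1}\sum_{i=1}^n l_i^2\,v_iv_i' = n^{-1}V'\Lambda^2 V$. Since $\Lambda^2$ is an $n\times n$ diagonal random matrix with i.i.d. entries and its empirical eigenvalue distribution converges almost surely to $L := \mathrm{law}(l_1^2)$ by the Glivenko-Cantelli theorem, the generalised Marchenko-Pastur / Silverstein theorem for weighted sample covariance matrices yields almost sure weak convergence $F^{X'X/n}\Rightarrow F_{\kappa,L}$ to a non-random probability measure depending only on $\kappa$ and on $L$. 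Part~\ref{lemma:traceConvA} forces the support of $F_{\kappa,L}$ to lie in $[c_0,\infty)$ with $c_0 := c^2(1-\sqrt{\kappa})^2$, on which $\lambda^{-1}$ is bounded; a Portmanteau argument applied to a bounded continuous function agreeing with $\lambda^{-1}$ on $[c_0/2,\infty)$ then gives $\int \lambda^{-1}\,dF^{X'X/n}\to\int \lambda^{-1}\,dF_{\kappa,L}$ almost surely. Setting $\tau(\kappa)^2 := \kappa\int \lambda^{-1}\,dF_{\kappa,L}$ establishes the claimed a.s. convergence, with the correct dependence on $\kappa$ and $L$ only.

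It remains to check that $\tau(\kappa)=0 \iff \kappa=0$. For $\kappa=0$ the bound $\trace(X'X)^{\dagger}\le c_1^{-1}\,p/n$ forces $\tau(0)=0$. For $\kappa>0$, $F_{\kappa,L}$ is a non-trivial probability measure with support contained in $[c_0,\infty)$, hence $\int \lambda^{-1}\,dF_{\kappa,L}>0$ and $\tau(\kappa)>0$. The main technical obstacle is the invocation of the generalised Marchenko-Pastur / Silverstein theorem for the weighted sample covariance matrix $V'\Lambda^2 V/n$, together with the argument (from part~\ref{lemma:traceConvA}) that the limiting spectral distribution is supported away from zero so that the integrand $\lambda^{-1}$ can be exchanged with the weak limit; the remaining steps are routine combinations of that result with part~\ref{lemma:traceConvA} and standard measure-theoretic manipulations.
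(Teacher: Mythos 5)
Your proposal is correct and follows essentially the same route as the paper: the bound $\lambda_{\min}(X'X)\ge c^2\lambda_{\min}(V'V)$ plus the Bai--Yin theorem for part (i), the resulting $p\,\lambda_{\min}(X'X)^{-m}$ bound for part (ii), and the generalised Mar\v{c}enko--Pastur/Silverstein limit for $V'\Lambda^2V/n$ combined with a truncated-integrand weak-convergence (Portmanteau) argument for part (iii), with $\kappa=0$ handled separately by the direct bound. The only cosmetic caveat is that the classical Bai--Yin statement covers $\kappa\in(0,1)$, so the case $\kappa=0$ of the smallest-eigenvalue limit deserves a separate (standard) justification, as the paper notes via a citation.
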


\begin{proof}
Let $\lambda_1\le \dots \le \lambda_p$ and $\mu_1\le \dots \le \mu_p$ denote the ordered eigenvalues of $X'X/n$ and $V'V/n$, respectively, and write $e_i\in\R^n$ for the $i$-th element of the canonical basis in $\R^n$. Then, 
\begin{align*}
\lambda_1 &= \inf_{\|w\|=1} w'V'\Lambda^2Vw/n 
= \inf_{\|w\|=1} \sum_{i=1}^n l_i^2 (e_i'Vw)^2/n \\
&\ge \left(\min_{i=1,\dots, n} l_i^2\right) \inf_{\|w\|=1} w'V'Vw/n 
= c^{2} \mu_1,
\end{align*}
and from the Bai-Yin Theorem \citep{Bai93} it follows that $\mu_1 \to (1-\sqrt{\kappa})^{2}>0$, almost surely, as $p/n\to \kappa \in[0,1)$ \citep[cf.][for the case $\kappa=0$]{Huber13}. Set $\alpha_m := c^{2m} (1-\sqrt{\kappa})^{2m}$ and, for $\alpha>0$, define the functions $h_0$ and $h_\alpha$ by $h_0(y) = 1/|y|$ if $y\ne 0$ and $h(0)=0$, and by $h_\alpha(y) = 1/|y|$, if $|y|> \alpha/2$ and $h_\alpha(y) = 2/\alpha$, if $|y|\le\alpha/2$. With this notation, and from the previous considerations, we see that the difference between
$$
\trace(X'X)^{\dagger m} = n^{-m} \trace(X'X/n)^{\dagger m} 
=  \frac{p}{n^m} \frac{1}{p}\sum_{j=1}^p h_0(\lambda_j^m),
$$
and 
$$
\frac{p}{n^m} \frac{1}{p}\sum_{j=1}^p h_{\alpha_m}(\lambda_j^m)
$$
converges to zero, almost surely, because $\lambda_j^m\ge \lambda_1^m \ge c^{2m}\mu_1^{m} \to \alpha_m > \alpha_m/2>0$, almost surely. But we have $n^{-m} \sum_{j=1}^p h_{\alpha_m}(\lambda_j^m) \le (p/n^m) (2/\alpha_m) \to 0$, if $m>1$ or $\kappa=0$. This finishes part~\ref{lemma:traceConvA}, \ref{lemma:traceConvB} and the case $\kappa=0$ of part~\ref{lemma:traceConvC}.

For the remainder of part~\ref{lemma:traceConvC}, let $m=1$ and $\kappa>0$, and first note that the empirical spectral distribution function $F_n^{\Lambda^2}$ of $\Lambda^2$ is simply given by the empirical distribution function of $l_1^2,\dots, l_n^2$, and this converges weakly (even uniformly) to the distribution function of $l_1^2$, almost surely. Hence, from Theorem~4.3 in \citet{BaiSilv10}, it follows that, almost surely, the empirical spectral distribution function $F_n^{X'X/n}$ of $X'X/n$ converges vaguely, as $p/n\to\kappa\in(0,1)$, to a non-random distribution function $F$ that depends only on $\kappa$ and on the distribution of $l_1^2$. From the argument in the previous paragraph we know that $\lambda_1 \ge c^2\mu_1 \to c^2(1-\sqrt{\kappa})^2=\alpha_1>0$, almost surely, and thus the support of $F$ must be lower bounded by $\alpha_1$. Since $h_{\alpha_1}$ is continuous and vanishes at infinity, by vague convergence, we have \citep[cf.][relation (28.2)]{Billingsley95}
\begin{align*}
\frac{1}{p}\sum_{j=1}^p h_{\alpha_1}(\lambda_j) = \int\limits_{-\infty}^\infty &h_{\alpha_1}(y) dF_n^{X'X/n}(y) \\
&\xrightarrow[]{a.s.} \int\limits_{-\infty}^\infty h_{\alpha_1}(y) dF(y) 
= \int\limits_{-\infty}^\infty \frac{1}{y}\,dF(y) =: \tau_0^2 \in (0, 1/\alpha_1).
\end{align*}
Thus 
$$
\frac{p}{n} \frac{1}{p} \sum_{j=1}^p h_{\alpha_1}(\lambda_j) \quad\xrightarrow[]{a.s.}\quad \kappa \tau_0^2 \;=:\; \tau^2 >0.
$$
\end{proof}


\begin{remark}\normalfont
If the $l_i$ in Lemma~\ref{lemma:traceConv} satisfy $|l_i|=1$, almost surely, then $\tau$ in part~\ref{lemma:traceConvC} is given by $\tau(\kappa)= \kappa/(1-\kappa)$ \citep[cf.][Lemma~B.2]{Huber13}.
\end{remark}

\end{appendix}
{\small
\bibliographystyle{chicago}
\bibliography{../../bibtex/lit}{}
}

\end{document}